\newtheorem{theorem}{Theorem}[section]
\newtheorem{definition}[theorem]{Definition}
\newtheorem{lemma}[theorem]{Lemma}
\newtheorem{example}[theorem]{Example}
\newtheorem{proposition}[theorem]{Proposition}
\newtheorem{corollary}[theorem]{Corollary}
\title{Coloring cross-intersecting families}
\author{Danila Cherkashin\footnote{Chebyshev Laboratory, St. Petersburg State University, 14th Line V.O., 29B, Saint Petersburg 199178 Russia; 
Moscow Institute of Physics and Technology, Lab of advanced combinatorics and network applications, Institutsky lane 9, Dolgoprudny, Moscow region, 141700, Russia; St.~Petersburg Department of V.~A.~Steklov Institute of Mathematics of the Russian Academy of Sciences.}}
\date{April 2017}
\begin{document}

\maketitle

\begin{abstract}
Intersecting and cross-intersecting families usually appear in extremal combinatorics in the vein of the Erd{\H o}s--Ko--Rado theorem~\cite{erdos1961intersection}.
On the other hand, P.~Erd{\H o}s and L.~Lov{\'a}sz in the noted paper~\cite{EL} posed problems on coloring intersecting families
as a restriction of classical hypergraph coloring problems to a special class of hypergraphs.
This note deals with the mentioned coloring problems stated for cross-intersecting families.
\end{abstract}

\section{Introduction}

Intersecting families in extremal combinatorics appeared in~\cite{erdos1961intersection}, and a large branch of extremal combinatorics starts from this paper.

\begin{definition}
Intersecting family is a hypergraph $H = (V, E)$ such that $e \cap f \neq \emptyset$ for every $e, f \in E$.
\end{definition}

Then P.~Erd\H{o}s and L.~Lov{\'a}sz in~\cite{EL} introduced several problems on coloring intersecting families (\textit{cliques} in the original notation), i.\,e.~hypergraphs without a pair of disjoint edges.
Obviously, an intersecting family could have chromatic number 2 or 3 only; the main interest refers to chromatic number 3. 
Unfortunately, there is no ``random'' example of such family, so the set of known intersecting families with chromatic number 3 is very poor.

Cross-intersecting families were introduced to study maximal and almost-maximal intersecting families 
(the notation appears in~\cite{matsumoto1989exact}).

\begin{definition}
Cross-intersecting family is a hypergraph $H = (V, E = A \cup B)$ such that every $a \in A$ intersects every $b \in B$, and $A$, $B$ are not empty.
\end{definition}

Also, the Hilton--Milner theorem~\cite{hilton1967some} and the Frankl theorem~\cite{frankl1987erdos} should be noted.
Recently a general approach to mentioned problems was introduced by A. Kupavskii and D. Zakharov~\cite{kupavskii2016regular} (the reader can also see this paper for a survey).

\subsection{The chromatic number}

We are interested in vertex colorings of cross-intersecting families. 
Coloring is \textit{proper} if there are no monochromatic edges.
\textit{Chromatic number} is the minimal number of colors that admits a proper coloring.
First, note that a cross-intersecting family could have an arbitrarily large chromatic number.

\begin{example}
Consider an arbitrary integer $r > 1$.
Consider a hypergraph $H_0 = (V_0, E_0)$ with chromatic number $r$.
Put $A := E_0$, $B := \{V_0\}$. Obviously, $H := (V_0, A, B)$ is a cross-intersecting family with chromatic number $r$.
\end{example}

\noindent However, under a natural assumption (note that it holds for any $n$-uniform hypergraph) a chromatic number of a cross-intersecting family is bounded.

\begin{proposition}
Let $H = (V, A, B)$ be a cross-intersecting family.
Suppose that $A$ and $B$ both have minimal elements of $E$, i.\,e.~there are such $a \in A$, $b \in B$ that $a$, $b$ both have no subedge in $H$.
Then $\chi(H) \leq 4$.
\end{proposition}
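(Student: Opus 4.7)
The plan is to exhibit an explicit proper $4$-coloring built from the two minimal edges. Fix minimal $a\in A$ and $b\in B$ as in the hypothesis; cross-intersection gives $a\cap b\neq\emptyset$, and if $a\neq b$ then minimality forbids $a\subsetneq b$ and $b\subsetneq a$, so both $a\setminus b$ and $b\setminus a$ are also nonempty.

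In the main case $a\neq b$, I partition $V$ into four classes (the last possibly empty)---$a\setminus b$, $b\setminus a$, $a\cap b$, $V\setminus(a\cup b)$---and assign them colors $1,2,3,4$ respectively. The edges $a$ and $b$ themselves use color sets $\{1,3\}$ and $\{2,3\}$, so neither is monochromatic. For an arbitrary $e\in A$, cross-intersection forces $e\cap b\neq\emptyset$, so $e$ picks up color $2$ or $3$; monochromaticity in color $2$ yields $e\subseteq b\setminus a\subsetneq b$ (strict since $a\cap b\neq\emptyset$), contradicting minimality of $b$, while monochromaticity in color $3$ yields $e\subseteq a\cap b\subsetneq a$ (strict since $a\setminus b\neq\emptyset$), contradicting minimality of $a$. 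The case $e\in B$ is symmetric.

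In the degenerate case $a=b$, the edge $a\in A\cap B$ meets every edge of $H$, so I produce a $3$-coloring instead: pick any $x\in a$ and assign distinct colors to $\{x\}$, $a\setminus\{x\}$, and $V\setminus a$. The edge $a$ itself uses two colors; any other $e$ meeting $a\setminus\{x\}$ either reaches outside $a\setminus\{x\}$, hence picks up a second color, or satisfies $e\subseteq a\setminus\{x\}\subsetneq a$, contradicting minimality of $a$; and any $e$ with $e\cap a=\{x\}$ must reach outside $a$ (otherwise $e=\{x\}$, impossible under the standing assumption that edges have at least two vertices), so it acquires a second color.

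The main obstacle is the bookkeeping in the case analysis: one has to combine minimality of $a$, minimality of $b$, and nonemptiness of $a\cap b$ in just the right way to exclude each potential monochromatic edge, and notice that the degenerate case $a=b$ collapses the four-part partition and needs its own (sharper, $3$-color) argument.
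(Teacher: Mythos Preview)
Your proof is correct and follows essentially the same approach as the paper: both use the four-part partition of $V$ into $a\cap b$, $a\setminus b$, $b\setminus a$, and $V\setminus(a\cup b)$, and rely on minimality of $a$ and $b$ to rule out monochromatic edges. Your write-up is in fact more careful than the paper's one-line argument, since you explicitly verify each monochromatic case and separately treat the degenerate possibility $a=b$ (which the paper silently ignores).
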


\begin{proof}
Let us color $a \cap b$ in color 1, $a \setminus b$ in color 2, $b \setminus a$ in color 3 and all other vertices in color 4.
One can see that the coloring is proper because both $a$ and $b$ have no subedge.
\end{proof}

It turns out, that if there is no pair $e_1$, $e_2 \in E$ such that $e_1 \subset e_2$ and every edge has a size of at least 3,
then the cross-intersecting family can have chromatic number 2 or 3 only. Moreover, the following theorem holds.

\begin{theorem}
Let $H = (V, A, B)$ be a cross-intersecting family such that there is no pair $e_1, e_2 \in A \cup B$ such that $e_1 \subset e_2$ (i.\ e. $(V, E)$ is a Sperner system). 
Then $\chi(H) \leq 3$ or $V := \{v_1, \dots, v_m, u_1, \dots u_l\}$; $B := \{ \{v_1, \dots, v_m\}, \{u_1, \dots u_l\} \}$; $A := \{ \{v_i, u_j\} \mbox{ for all } i, j \}$ (modulo $A$-$B$ symmetry), where $m$, $l \geq 2$.
\label{chi23}
\end{theorem}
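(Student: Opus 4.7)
The plan is to assume $\chi(H)\ge 4$ and deduce that $H$ coincides with the exceptional family. Fix arbitrary $a\in A$ and $b\in B$; cross-intersection gives $a\cap b\neq\emptyset$, while the Sperner condition forbids $a\subseteq b$ or $b\subseteq a$, so $a\setminus b$ and $b\setminus a$ are nonempty as well. Consider the candidate $3$-coloring that assigns color $1$ to $a\cap b$, color $2$ to $a\setminus b$, color $3$ to $b\setminus a$, and splits the remainder $R:=V\setminus(a\cup b)$ into $R_2$ (color $2$) and $R_3$ (color $3$). With this setup both $a$ and $b$ are automatically properly colored.

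I would then catalogue which other edges $e$ could be monochromatic. A mono-$1$ edge would lie in $a\cap b\subsetneq a$, violating Sperner; a mono-$2$ edge in $A$ would be disjoint from $b$, contradicting cross-intersection; symmetrically for mono-$3$ in $B$. The only remaining obstructions are edges $e\in B$ disjoint from $b$ with $e\subseteq (a\setminus b)\cup R_2$, and edges $e\in A$ disjoint from $a$ with $e\subseteq (b\setminus a)\cup R_3$. If $B$ is an intersecting family the first type is vacuous, and setting $R_2=R,\ R_3=\emptyset$ also kills the second (any offending $e$ would lie inside $b\setminus a\subsetneq b$, violating Sperner); the symmetric argument applies when $A$ is intersecting, and in either case $\chi(H)\le 3$.

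It remains to handle the case in which both $A$ and $B$ are non-intersecting; fix disjoint $b_0,b_1\in B$ and disjoint $a_0,a_1\in A$. For the choice $(a,b)=(a_0,b_0)$, a proper coloring of the above shape exists iff we can partition $R$ so that $R_2$ meets every $e\cap R$ with $e\in A$ and $e\cap a_0=\emptyset$, while $R_3=R\setminus R_2$ meets every $e\cap R$ with $e\in B$ and $e\cap b_0=\emptyset$. If this two-coloring problem on $R$ is feasible for some choice of $(a,b)$, we are done; otherwise we must argue structurally.

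The main obstacle, and the heart of the proof, is showing that persistent infeasibility of this two-coloring problem across all choices of $(a,b)$ forces the exceptional configuration. I would analyse the constraints on $R$ coming from $A$-edges disjoint from $a_0$: infeasibility forces them to produce singleton constraints that pin down each vertex of $R$, and Sperner plus cross-intersection then dictate that every such $A$-edge has size exactly $2$ with one endpoint in $b_0\setminus a_0$ and one in $R$. Iterating with other pairs $(a_0,b)$ forces $b_0\cup b_1=V$, rules out any third edge of $B$ by Sperner, and forces $A$ to be exactly the complete bipartite set of pairs across $b_0$ and $b_1$. This final structural bookkeeping, turning the failure of the two-coloring on $R$ into the precise description in the theorem, is where essentially all the remaining work lies.
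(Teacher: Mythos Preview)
Your framework is genuinely different from the paper's. The paper colors two single vertices $v_a\in a\setminus b$, $v_b\in b\setminus a$ with color~1, the rest of $a\cup b$ with color~2, and everything outside with color~3, after choosing $(a,b)$ to minimize $|a\cup b|$; that minimality is what kills monochromatic color-2 edges in the case where no edge has size~2. The case where a 2-edge exists is then handled by a dedicated lemma (``if $\{u,v\}\in A$ and $u\in b\in B$, then $\{v,w\}\in E$ for every $w\in b$, or $\chi(H)\le 3$'') and a lengthy but concrete case analysis that pins down the exceptional family. Your scheme instead colors $a\cap b$, $a\setminus b$, $b\setminus a$ and splits the leftover $R$ freely; the case split ``$A$ or $B$ intersecting'' versus ``both non-intersecting'' is also different, and your treatment of the first case is clean.

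The gap is in the second case. You reduce to a transversal-type 2-coloring of $R$ (find $R_2$ meeting every $e\cap R$ for $e\in A$, $e\cap a_0=\emptyset$, while $R\setminus R_2$ meets every $e\cap R$ for $e\in B$, $e\cap b_0=\emptyset$) and then assert that infeasibility ``forces singleton constraints that pin down each vertex of $R$''. That assertion is doing all the work and is not justified: infeasibility of such a system can arise from configurations with no singleton $f_A$'s at all (for instance $\mathcal F_A=\{\{1,2\}\}$, $\mathcal F_B=\{\{1\},\{2\}\}$ on $R=\{1,2\}$ is already infeasible). You have not explained why cross-intersection and the Sperner property rule out every such pattern except the one produced by the exceptional family, nor why iterating over other pairs $(a,b)$ pins down $|e|=2$ for the relevant $A$-edges or forces $b_0\cup b_1=V$. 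As you yourself note, ``essentially all the remaining work lies'' there; at present it is a plan, not a proof. The paper's minimality trick on $|a\cup b|$ and its 2-edge lemma are exactly the engines that let one finish, and your outline currently lacks a substitute for either.
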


\begin{corollary}
Let $H = (V, A, B)$ be an $n$-uniform cross-intersecting family. Then $\chi (H) \leq 3$ or $n=2$ and $H = K_4$.
\label{corchi23}
\end{corollary}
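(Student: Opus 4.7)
The plan is to derive this corollary directly from Theorem~\ref{chi23} with essentially no extra work; the whole content is that $n$-uniformity collapses the exceptional case down to $K_4$.

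First I would observe that if every edge of $H$ has the same size $n$, then $(V, A \cup B)$ is automatically a Sperner system: a proper containment $e_1 \subsetneq e_2$ would force $|e_1| < |e_2|$, contradicting uniformity. Hence Theorem~\ref{chi23} applies, and we are in one of its two conclusions.

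If $\chi(H) \leq 3$ we are done. Otherwise $H$ has the exceptional form described in Theorem~\ref{chi23}: $V = \{v_1, \dots, v_m, u_1, \dots, u_l\}$, with $B = \{\{v_1, \dots, v_m\}, \{u_1, \dots, u_l\}\}$ and $A$ equal to the full set of ``transversal'' pairs $\{v_i, u_j\}$, where $m, l \geq 2$. I would then impose $n$-uniformity on this configuration: the edges of $A$ have size $2$, while the two edges of $B$ have sizes $m$ and $l$. Equality $2 = m = l = n$ forces $n = 2$ and $m = l = 2$, leaving $V = \{v_1, v_2, u_1, u_2\}$ and $A \cup B$ consisting of the four cross-pairs together with $\{v_1, v_2\}$ and $\{u_1, u_2\}$, i.e.\ all six edges of $K_4$.

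There is no real obstacle here: the only step that requires a moment's care is verifying that equal-sized edges cannot form a proper inclusion, so that Theorem~\ref{chi23} is indeed applicable; everything else is just matching the sizes in the exceptional family against the uniformity hypothesis.
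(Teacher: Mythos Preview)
Your proposal is correct and matches the paper's approach: the paper states this corollary without proof, treating it as immediate from Theorem~\ref{chi23}, and your argument is exactly the intended derivation. The only content is the observation that $n$-uniformity forces the Sperner condition and then collapses the exceptional family (where $|A|$-edges have size $2$ and $|B|$-edges have sizes $m,l\geq 2$) to $m=l=n=2$, i.e.\ $K_4$.
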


\begin{corollary}
Let $H = (V, A, B)$ be an $n$-uniform cross-intersecting family and $\min (|A|, |B|) \geq 3$. Then $\chi (H) \leq 3$.
\label{cor2} 
\end{corollary}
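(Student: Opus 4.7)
The plan is to read this off directly from Corollary \ref{corchi23} (equivalently, from Theorem \ref{chi23}). An $n$-uniform hypergraph is automatically Sperner, because two distinct sets of the same size $n$ cannot be comparable under inclusion, so the hypothesis of Theorem \ref{chi23} is satisfied for free. Thus either $\chi(H)\leq 3$, which is the desired conclusion, or $H$ falls into the exceptional family described in the theorem.

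I would then just inspect the exceptional family and note that it violates $\min(|A|,|B|)\geq 3$. The exceptional structure (modulo $A$-$B$ symmetry) has $B=\{\{v_1,\dots,v_m\},\{u_1,\dots,u_l\}\}$, so $|B|=2$; by symmetry, in either labeling one of the two parts has exactly two edges. (For $n$-uniformity one even forces $m=l=n=2$, i.e.\ $H=K_4$, which is exactly the exceptional case flagged by Corollary \ref{corchi23}.) Either way $\min(|A|,|B|)=2<3$, contradicting our assumption, so this branch is impossible and $\chi(H)\leq 3$.

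There is really no obstacle here beyond unpacking the exceptional case of Theorem \ref{chi23}; the corollary is essentially the statement that imposing $\min(|A|,|B|)\geq 3$ kills the only bad example allowed by the theorem in the $n$-uniform setting.
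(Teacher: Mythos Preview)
Your argument is correct and matches the paper's intent: the paper states Corollary~\ref{cor2} without proof, as an immediate consequence of Theorem~\ref{chi23} (via Corollary~\ref{corchi23}), and your deduction---that $n$-uniformity gives the Sperner condition for free, and that the exceptional configuration has one part of size exactly $2$---is precisely the intended reading.
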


\subsection{Maximal number of edges}

It turns out that the maximal number of edges in a ``nontrivial'' $n$-uniform intersecting family is bounded.
There are two ways to formalize the notion ``nontrivial''. The first one is to say that $\chi(H) \geq 3$ (denote the corresponding maximum by $M(n)$). The second one says that $H$ is nontrivial if and only if $\tau (H) = n$ (denote the corresponding maximum by $r(n)$), where $\tau (H)$ is defined below.

\begin{definition}
Let $H = (V, E)$ be a hypergraph. The \textit{covering number} (also known as \textit{transversal number} or \textit{blocking number}) of $H$ is the smallest integer $\tau (H)$ that
there is a set $A \subset V$ such that every $e \in E$ intersects $A$ and $|A| = \tau$.
\end{definition}

\subsubsection{Upper bounds.} Obviously, $M(n) \leq r(n)$. P.~Erd\H{o}s and L.~Lov{\'a}sz proved in~\cite{EL} that $r(n) \leq n^{n}$ 
(one can find slightly better bound in~\cite{cherkashin2011maximal}).
The best current upper bound is $r(n) \leq cn^{n-1}$ (see~\cite{arman2017upper}). 
Surprisingly, we can prove a very similar statement for cross-intersecting families. Let us introduce a ``nontriviality'' notion for cross-intersecting families. 

\begin{definition}
Let us call a cross-intersecting family $H = (V, A, B)$ \textit{critical} if 
\begin{itemize}
    \item for any edge $a \in A$ and any $v \in a$ there is $b \in B$ such that $a \cap b = \{v\}$;
    \item for any edge $b \in B$ and any $v \in b$ there is $a \in A$ such that $a \cap b = \{v\}$.
\end{itemize}
\end{definition}

\noindent Note that if an $n$-uniform intersecting family $H = (V,E)$ has $\tau (H) = n$ then $(V, E, E)$ is a critical cross-intersecting family.

\begin{theorem}
\label{max}
Let $H = (V, A, B)$ be a critical cross-intersecting family. Denote
$$n: = \max_{e \in A \cup B} |e|.$$
Then 
$$\max(|A|, |B|) \leq n^n.$$ 
\end{theorem}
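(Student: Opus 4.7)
The strategy is to adapt the Erd\H{o}s--Lov\'{a}sz proof that $r(n)\le n^{n}$ to the cross-intersecting setting: the two criticality conditions jointly play the role that the single condition $\tau(H)=n$ plays for intersecting families. By the $A\leftrightarrow B$ symmetry of the definition, it suffices to bound $|A|$, and the plan is to exhibit an injection $A\hookrightarrow[n]^{n}$.

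Fix an edge $b_{0}\in B$ of size at most $n$ and order its vertices. For $a\in A$, cross-intersection gives $a\cap b_{0}\neq\emptyset$, and the first coordinate of the label $\ell(a)\in[n]^{n}$ is the index of the least vertex of $a\cap b_{0}$. Inductively, having extracted the first $k$ coordinates and the corresponding vertices $v_{1},\dots,v_{k}\in a$, criticality of $B$ applied to the pair (previous $B$-witness, $v_{k}$) canonically selects a witness $a^{(k)}\in A$ meeting the previous $B$-witness in exactly $\{v_{k}\}$; a deterministic choice of auxiliary vertex $u_{k}\in a^{(k)}\setminus\{v_{1},\dots,v_{k}\}$ and an application of criticality of $A$ at $(a^{(k)},u_{k})$ then canonically selects a new $B$-witness $b^{(k)}\in B$ with $a^{(k)}\cap b^{(k)}=\{u_{k}\}$. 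The choice $u_{k}\notin\{v_{1},\dots,v_{k}\}$ forces the newly extracted vertex $v_{k+1}\in a\cap b^{(k)}$ to differ from all previous $v_{i}$ that lie in $a^{(k)}$, via the relation $a^{(k)}\cap b^{(k)}=\{u_{k}\}$. After $n$ steps the vertices $v_{1},\dots,v_{n}$ are pairwise distinct, hence exhaust $a$ because $|a|\le n$, so $\ell$ is injective and $|A|\le n^{n}$.

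The main obstacle I foresee is guaranteeing \emph{full} distinctness of the $v_{i}$'s at every step. The mechanism above rules out reuse of those previous $v_{i}$ that lie in the current witness $a^{(k)}$; a $v_{i}\notin a^{(k)}$ could in principle reappear in $b^{(k)}$ and thence in $a\cap b^{(k)}$. Resolving this will require either a stronger deterministic selection rule for $u_{k}$, or several iterated applications of criticality of $A$ on $a^{(k)}$ at different auxiliary vertices, together with handling degenerate cases (witness edges of size one, which force a trivial subfamily to be analysed separately, and edges $a$ with $|a|<n$, which call for a canonical padding of the label). These bookkeeping issues are the technical heart of the argument; modulo them, the alternate use of the two criticality hypotheses directly substitutes for the single condition $\tau(H)=n$ in the classical Erd\H{o}s--Lov\'{a}sz proof.
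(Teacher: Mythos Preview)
Your outline is not yet a proof: you correctly isolate the obstruction---that $v_{k+1}\in a\cap b^{(k)}$ may equal some earlier $v_i\notin a^{(k)}$---but leave it open. The suggested remedies (a stronger selection rule for $u_k$, or iterated applications of criticality) are not carried out, and it is not clear they can work, since the relation $a^{(k)}\cap b^{(k)}=\{u_k\}$ says nothing about $b^{(k)}$ outside $a^{(k)}$. As it stands, $\ell$ is not shown to be injective.

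The detour through the second criticality hypothesis is in fact unnecessary. First note that $A$ is automatically Sperner: if $a'\subsetneq a''$ in $A$, pick $v\in a''\setminus a'$; criticality yields $b\in B$ with $a''\cap b=\{v\}$, whence $a'\cap b=\emptyset$, contradicting cross-intersection. Now, at a branching node with prefix $(v_1,\dots,v_k)$ reached by more than one edge of $A$, every such edge properly contains $\{v_1,\dots,v_k\}$; fix one of them, call it $a^\ast$, pick $w\in a^\ast\setminus\{v_1,\dots,v_k\}$, and let $b^{(k)}\in B$ satisfy $a^\ast\cap b^{(k)}=\{w\}$ (first criticality hypothesis). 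Then $b^{(k)}$ is disjoint from $\{v_1,\dots,v_k\}\subset a^\ast$, so for every $a$ at this node the next vertex $v_{k+1}\in a\cap b^{(k)}$ is genuinely new. This repairs the branching argument using only one of the two criticality conditions.

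The paper proceeds differently and more briefly. It invokes the H\r{a}stad--Jukna--Pudl\'{a}k flower lemma: any family of more than $(k-1)^n$ sets of size at most $n$ contains a subfamily with a common core $W$ whose petals (the parts outside $W$) have covering number at least $k$. Applying this with $k=n+1$, if $|A|>n^n$ then $A$ contains such a flower; every $b\in B$, having at most $n$ vertices, cannot cover all $n+1$ petals and must therefore meet $W$. But then for any flower edge $a\supseteq W$ and any $v\in a\setminus W$, no $b\in B$ can satisfy $a\cap b=\{v\}$, contradicting criticality. This bypasses the branching bookkeeping entirely.
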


\subsubsection{Lower bounds.} L.~Lov{\'a}sz conjectured that $M(n) = [(e - 1)n!]$ (an example was constructed in~\cite{EL}).
This was disproved by P. Frankl, K. Ota and N. Tokushige~\cite{frankl1996covers}.
They have provided an explicit example of an $n$-uniform hypergraph $H$ with $\tau (H) = n$ and 
\begin{equation}
c \left (\frac{k}{2} + 1 \right)^{k-1}
\label{2}
\end{equation}
edges. For cross-intersecting families Example~\ref{nn} shows that Theorem~\ref{max} is tight.

\subsection{The set of the pairwise edge intersection sizes}

\begin{definition}
For a hypergraph $H = (V,E)$ let us consider the set of the sizes of pairwise edge intersections:
$$Q(H) := \{|e_1 \cap e_2|, e_1,e_2 \in E\}.$$
\end{definition}

\noindent Again, P.~Erd\H{o}s and L.~Lov{\'a}sz showed that for an $n$-uniform intersecting family $H$ one has
$3 \leq |Q(H)|$ for sufficiently large $n$, but there is no example with $|Q(H)| < \frac{n-1}{2}$. For cross-intersecting families there is a simple example with $|Q(H)| = 4$.

\begin{theorem}
\label{edgint}
There is an $n$-uniform cross-intersecting family $H$ with $Q(H) = \{0, 1, 2, n-1\}$ and $\chi (H) = 3$.
\end{theorem}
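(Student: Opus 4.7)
The plan is to give a completely explicit construction of $H=(V,A,B)$ and verify the three conditions (cross-intersection, $Q(H)=\{0,1,2,n-1\}$, and $\chi(H)=3$) by direct inspection. Keeping $|A|$ and $|B|$ constant (independent of $n$) makes the first two checks finite and mechanical; the real work is forcing $\chi(H)=3$.

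For the construction I would start with a ``sunflower'' inside $B$: two edges $b_1,b_2$ meeting in exactly $n-1$ points (which supplies the value $n-1$), together with a third edge $b_3$ disjoint from $b_1$ (supplying the value $0$, which can only come from a pair inside $A$ or inside $B$, since $H$ is cross-intersecting). The edges of $A$ are then chosen to be $n$-sets that each meet every $b_j$ in either $1$ or $2$ points (supplying the values $1$ and $2$), with pairwise $A$-intersections realizing only sizes already present. Cross-intersection is automatic. Checking $Q(H)\subseteq\{0,1,2,n-1\}$ reduces to inspecting the finitely many unordered pairs $\{e,e'\}$ in $A\cup B$, so it is routine, and a proper $3$-colouring can be written down by hand once the construction is fixed.

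The main obstacle is showing that no proper $2$-colouring exists. I plan to argue by exhaustion: given a $2$-colouring $V=V_0\sqcup V_1$, the conditions that $b_1,b_2,b_3$ are each non-monochromatic severely restrict the colour pattern on the sunflower core $b_1\cap b_2$ and on $b_3$, leaving only a handful of essentially different cases (parametrised by how the colours are distributed among the core, the two ``petal'' vertices of $b_1\triangle b_2$, and $b_3$). The edges of $A$ are selected precisely so that in each surviving case some $a\in A$ becomes monochromatic. The delicate balance is that adding more edges to $A$ to kill more colour patterns risks introducing new intersection sizes into $Q(H)$, so the construction must satisfy both constraints simultaneously; this is where I expect the cleverest part of the proof to lie.
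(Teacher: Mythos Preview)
Your plan has a fatal gap: you propose to keep $|A|$ and $|B|$ bounded independently of $n$, but then $\chi(H)=3$ is impossible for large $n$. Indeed, the elementary Property~B bound $m(n)\ge 2^{n-1}$ (a random $2$-colouring leaves a fixed $n$-edge monochromatic with probability $2^{1-n}$, and a union bound over the edges finishes) says that \emph{every} $n$-uniform hypergraph with fewer than $2^{n-1}$ edges is properly $2$-colourable. So as soon as $n$ exceeds $\log_2(|A|+|B|)+1$, the family you build will have $\chi(H)=2$, no matter how cleverly you arrange the edges of $A$ to block colour patterns on $b_1,b_2,b_3$. The ``delicate balance'' you anticipate is not merely delicate --- it is unachievable within your framework.

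The paper sidesteps this by letting the number of edges grow with $n$ while outsourcing the hard $3$-chromaticity statement. It takes as a black box an $(n-1)$-uniform \emph{simple} hypergraph $H_0=(V_0,E_0)$ with $\chi(H_0)=3$ (simple meaning any two edges meet in at most one vertex; such hypergraphs are known to exist). Then it sets $V=V_0\sqcup\{u_1,\dots,u_n\}$, puts a single edge $B=\{\{u_1,\dots,u_n\}\}$, and lets $A=\{\,e\cup\{u_i\}:e\in E_0,\ 1\le i\le n\,\}$. Cross-intersection is immediate. For $\chi(H)=3$: in any $2$-colouring some $e\in E_0$ is monochromatic (say blue), forcing every $u_i$ red, which makes the unique $B$-edge monochromatic. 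For $Q(H)$: two $A$-edges $e\cup\{u_i\}$ and $e'\cup\{u_j\}$ intersect in $(e\cap e')\cup(\{u_i\}\cap\{u_j\})$, and since $|e\cap e'|\in\{0,1,n-1\}$ (the last when $e=e'$) and $|\{u_i\}\cap\{u_j\}|\in\{0,1\}$, one gets exactly the sizes $0,1,2,n-1$; the $A$--$B$ intersections all have size $1$. The point is that the simplicity of $H_0$ controls $Q$, while its $3$-chromaticity (which requires exponentially many edges) supplies $\chi(H)=3$ for free.
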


\noindent See Example~\ref{exex} for the proof.

\subsection{Examples}
Unlike the case of intersecting families there is a method of constructing a large set of (critical) cross-intersecting families with chromatic number 3, based 
on percolation.
This method makes it possible to construct a cross-intersecting family from a random planar triangulation.

\begin{example}
Consider an arbitrary planar triangulation with external face $F$ that has a size of at least 4.
Split $F$ into 4 disjoint connected parts $F_1$, $F_2$, $F_3$, $F_4$.
Let $A_0$ be the set of collections of vertices that form a simple path from $F_1$ to $F_3$; $B_0$ be the set of collections of vertices that form a simple path from $F_2$ to $F_4$.
Finally, let $A \subset A_0$, $B \subset B_0$ be the sets of all minimal (by the inclusion relation) subsets; $H = (V, E)$.

Obviously, $\chi (H) = 3$ (one may see that no example with chromatic number 4 could be obtained from a planar triangulation).
\end{example}

For a given $n > 2$ there exists an $n$-uniform cross-intersecting family (not critical) with chromatic number 3 and an arbitrarily large number of edges.

\begin{example}
Let $m$ be an arbitrary integer number.
Put $V(H) := \{v_1, \dots, v_{2n-1} \} \cup \{u_1, \dots, u_m \}$; 
$E(H) := A_1 \cup A_2 \cup B_1 \cup B_2$, where $A_1 \cup B_1$ is the set of all $n$-subsets of $\{v_1, \dots, v_{2n-1} \}$, 
$A_1$ contains edges intersecting $\{v_1, \dots v_{n-1}\}$, $B_1$ contains edges intersecting $\{v_1, v_n, \dots, v_{2n-3} \}$ 
(so $A_1 \cap B_1 \neq \emptyset$),
$$A_2 := \{ \{v_1, \dots v_{n-1}, u_i \} \ \text {for every} \ i\},$$
$$B_2 := \{ \{v_1, v_n, \dots, v_{2n-3}, u_i \} \  \text{for every} \ i\}.$$
Note that $H_1 := (V_1, A_1 \cup B_1)$ has chromatic number 3, so $\chi (H) \geq 3$, hence by Corollary~\ref{corchi23} we have $\chi (H) = 3$.

Let us show that $H$ is a cross-intersecting family. 
Clearly, since $A_1$, $B_1 \subset V_1$ every edge from $A_1$ intersects with every edge from $B_1$.
By the definition every edge of $A_2$ contains $\{v_1, \dots v_{n-1} \}$ so it intersects with every edge from $B_1$; by symmetry the same holds for $B_2$ and $A_1$.
Also every edge from $A_2$ intersects with every edge from $B_2$ at the point $v_1$. 
\end{example}

\begin{example}
\label{nn}
Consider an arbitrary $n > 1$.
Let $V := \{v_{ij}\ | \  1 \leq i, j \leq n \}$, $A := \{ \{v_{i1}, \dots v_{in} \} \ | \ 1 \leq i \leq n  \}$, 
$B := \{ \{v_{1i_1}, v_{2i_2}, \dots, \ v_{ni_n} \} \ | \ 1 \leq i_1, i_2, \dots, i_n \leq n \}$.
Note that $|A| = n$, $|B| = n^n$.
Obviously, $H := (V, A, B)$ is a cross-intersecting family and $\chi (H) = 3$.
\end{example}

\begin{example}[Proof of Theorem~\ref{edgint}]
\label{exex}
Our construction is based on the following object. 

\begin{definition}
A hypergraph is called simple if every two edges share at most one vertex.
\end{definition}

Let us take an $(n-1)$-uniform simple hypergraph $H_0 = (V_0, E_0)$ such that $\chi (H) = 3$ (see~\cite{EL, kostochka2010constructions} for constructions).
Denote $V := V_0 \sqcup \{u_1, \dots, u_n\}$, $B := \{\{u_1, \dots, u_n \}\}$, $A := \{e \cup \{u_i\} | e \in E_0, 1 \leq i \leq n\}$. By the construction, $H$ is an $n$-uniform cross-intersecting family.

Let us show that $\chi (H) = 3$. Suppose the contrary, i.e.~there is a 2-coloring of $V$ without monochromatic edges of $A \cup B$. 
By the definition of $H_0$ every 2-coloring of $V_0$ gives a monochromatic (say, blue) edge $e \in E_0$.
Then every $u_i$ is red, otherwise $e \cap \{u_i\}$ is monochromatic. So $\{u_1, \dots, u_n\}$ is red, a contradiction.

Note that $Q(H_0) = \{0,1\}$, so $Q(H) = \{0, 1, 2, n-1\}$.
\end{example}

\section{Proofs}

\begin{proof}[Proof of Theorem~\ref{chi23}]
First, suppose that there is no edge of size 2.
Consider such a pair $a \in A$, $b \in B$ that $|a \cup b|$ is the smallest.
Pick arbitrary vertices $v_a \in a \setminus b$ and $v_b \in b \setminus a$. Let us color $v_a$ and $v_b$ in color 1, $a \cup b \setminus \{v_a, v_b\}$ in color 2 and the remaining vertices in color 3.

Let us show that this coloring is proper. Since there is no edge of size 2, there is no edge of color 1.
Every edge intersects $a$ or $b$, so there is no edge of color 3.
Suppose that there is an edge $e$ of color 2. Without loss of generality $e \in A$.
Then $e \subset |a \cup b \setminus \{v_a\}|$, so $|e \cup b| < |a \cup b|$, a contradiction.

Now let us consider the case $\{u, v\} \in E(H)$. 

\begin{lemma}
Let $a = \{u, v\} \in A$, $u \in b \in B$. Then for every $w \in B$ there is the edge $\{v, w\} \in E(H)$ or $\chi (H) \leq 3$.
\end{lemma}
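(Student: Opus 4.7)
The plan is to argue contrapositively: assuming $\{v,w\} \notin E(H)$, I construct a proper 3-coloring of $H$. The three color classes will be built from the distinguished objects of the hypothesis—the 2-edge $a = \{u,v\}$, the $B$-edge $b \ni u$, and the vertex $w \in b$.

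Concretely, I would assign color 1 to the pair $\{v,w\}$, color 2 to $b \setminus \{w\}$ (which still contains $u$), and color 3 to all remaining vertices. The three monochromatic checks are each short. A color-1 edge would have to equal $\{v,w\}$ itself—since the existence of the 2-edge $a$ together with the Sperner condition precludes singleton edges—and this is excluded by assumption. A color-2 edge sits inside $b \setminus \{w\} \subsetneq b$, directly contradicting Sperner. Finally, a color-3 edge $e$ avoids $b \cup \{v\}$ entirely; if $e \in A$ it then misses $b$, and if $e \in B$ it misses $a = \{u,v\}$, both violating cross-intersection.

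The main obstacle is ensuring the coloring is well defined, i.e.\ that $v \notin b \setminus \{w\}$. If $v \in b$, then $a \subseteq b$ and Sperner forces $a = b$; in that case $w \in b$ is either $u$, placing us directly in the first alternative of the lemma since $\{v,w\} = a \in E(H)$, or $w = v$, which degenerates to a singleton and can be discarded. Once these collapses are dispatched, the argument reduces to the three brief verifications above.
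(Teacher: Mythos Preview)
Your proof is correct and follows exactly the same approach as the paper: the paper's argument is the single sentence ``otherwise one can color $v$, $w$ in color 1, $b \setminus w$ in color 2 and all other vertices in color 3, producing a proper 3-coloring,'' and you have simply spelled out the verification that this coloring is proper. One small imprecision: the existence of the 2-edge $a$ and Sperner do not by themselves preclude \emph{all} singleton edges, only $\{v\}$ (via $\{v\}\subsetneq a$); to rule out $\{w\}$ you should instead invoke $\{w\}\subsetneq b$, but the conclusion stands.
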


\begin{proof}
Suppose that $\chi (H) > 3$. Then for every $w \in b$ there is the edge $\{w, v\} \in E (H)$, 
otherwise one can color $v$, $w$ in color 1, $b \setminus w$ in color 2 and all other vertices in color 3, producing a proper 3-coloring.
\end{proof}

Without loss of generality $\{u, v\} \in A$.
Consider any edge $b \in B$ (without loss of generality $u \in b$). By lemma there is every edge $\{v, w\} \in E(H)$ for $w \in B$.
Suppose that for some $w \in b$ there is the edge $\{v, w\} \in B$. 
Then, by lemma (for $a = \{u, v\}$ and $b = \{v, w\}$) we have $\{u, w\} \in E(H)$, so $b = \{u, w\}$. 
So $H$ contains a triangle on $\{u, v, w\}$ with edges both in $A$ and $B$ ($\star$).
If $H$ coincides with the triangle on $\{u, v, w\}$, then $\chi (H) = 3$. Otherwise, $H$ contains $e$ which does not 
intersect one of the edges $\{u, v\}$, $\{u, w\}$, $\{v, w\}$.
So, we can change denotation as follows: $\{u, v, w\} = \{q, r, s\}$, such that $e$, $\{q, r\} \in B$ and $e \cap \{q,r\} = \emptyset$. 
Note that one of the edges $\{q, s\}$, $\{r, s\}$ lies in $A$ (without loss of generality it is $\{q, s\}$).

By lemma (for $a = \{q, s\}$ and $b = e$) there is an edge $\{q, t\}$ for every $t \in e$. If $\{r, s\} \in B$, then $\{q, t\} \in A$ for every $t \in e \setminus s$.
So by lemma (for $a = \{q, t\}$ and $b = \{q, r\}$) there is an edge $\{r, t\}$ for every $t \in e$. If $\{r, s\} \in A$, then by lemma again (for $a = \{r, s\}$ and $b = e$) there is an edge $\{r, t\}$ for every $t \in e$.
Summing up, we have edges $\{q, r\}$, $e \in B$,  $\{q, s\} \in A$ and $\{x, t\} \in E(H)$ for every choice $x \in \{q, r\}$ and $t \in e$.

Suppose that $|e| > 2$. It means that there are different $s, t_1, t_2 \in e$. 
Note that $\{r, t_1\} \in A$ since $\{q, s\} \in A$, so $\{q, t_2\} \in A$. Thus every edge $\{x, t\} \in A$ for every choice $x \in \{q, r\}$ and $t \in e$.
Obviously, we have listed all edges of the hypergraph, so we proved the claim in this case. 
Note also that the set of colors in $\{q, r\}$ does not intersect the set of colors in $e$, so $\chi (H) = 4$. 
If $|e| = 2$, then $H = K_4$, and again $\chi (H) = 4$.

In the remaining case we have all $\{v, w\}$ in $A$. If $|B| = 1$, then $\chi (H) \leq 3$, so there is an edge 
$b' \in B$, such that it does not contain $u$.
Suppose that $b \cap b' = \emptyset$. 
Then by lemma (for $a = \{v, w\}$ and $b'$) we have edges $\{w, t'\}$ for every $w \in b$ and $t' \in b'$. 
Obviously, all these edges lie in $A$, otherwise we are done by the first case 
(if some $\{w, t'\} \in B$, then we have $\{w, v\} \in A$, $\{w, t'\}$, $b' \in B$).

If $b \cap b' \neq \emptyset$, then by lemma for $a = \{u, v\}$ and $b$ we have an edge $\{v, t\}$ for some $t \in b \cap b'$. Then $b' = \{v, t\}$.
Analogously, $b = \{u, t\}$. So the condition ($\star$) holds, and we are done. 

\end{proof}

\begin{proof}[Proof of Theorem~\ref{max}]
First, we need the following definition.
\begin{definition}
Let $H = (V,E)$ be a hypergraph and $W$ be a subset of $V$. Define
$$H_W := (V \setminus W, \{e \setminus W \ |\ e \in E \}).$$
Then $H$ is a \text{flower} with $k$ \text{petals} with core $W$ if $\tau (F_W) \geq k$.
\end{definition}

\noindent The following Lemma was proved by J. H\r{a}stad, S. Jukna and P. Pudl{\'a}k~\cite{haastad1995top}. 
We provide its proof for the completeness of presentation.

\begin{lemma}
Let $H = (V, E)$ be a hypergraph; $n := \max_{e \in E} |e|$.
If $|E| > (k - 1)^n$ then $F$ contains a flower with $k$ petals.
\end{lemma}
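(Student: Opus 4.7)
The plan is to prove the lemma by induction on $n$, the maximum edge size. The statement is vacuous when $n=0$ (all edges would be empty and there can be at most one such edge, while $(k-1)^0 = 1$), so the base case gives nothing to check. For the inductive step, assume the claim holds for all hypergraphs with maximum edge size at most $n-1$, and let $H=(V,E)$ have $\max_{e\in E}|e|=n$ and $|E|>(k-1)^n$.

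The key dichotomy is on $\tau(H)$ itself. If $\tau(H)\geq k$, then the empty set $W=\emptyset$ already witnesses that $H$ is a flower with $k$ petals (since $H_\emptyset = H$). So assume $\tau(H)\leq k-1$, and fix a transversal $T$ of size at most $k-1$. Every edge of $H$ meets $T$, so by the pigeonhole principle there is a vertex $v\in T$ contained in at least
$$\frac{|E|}{k-1} > (k-1)^{n-1}$$
edges of $H$. Let $E_v := \{e\in E : v\in e\}$ and form the hypergraph $H' := (V\setminus\{v\},\,\{e\setminus\{v\} : e\in E_v\})$. Its maximum edge size is at most $n-1$ and it has more than $(k-1)^{n-1}$ edges, so the inductive hypothesis yields a set $W'\subseteq V\setminus\{v\}$ such that $\tau(H'_{W'})\geq k$.

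The final step is to promote $W'$ to a core for the original $H$ by setting $W := W'\cup\{v\}$. For every $e\in E_v$ one has $e\setminus W = (e\setminus\{v\})\setminus W'$, so the edge set of $H'_{W'}$ is contained in the edge set of $H_W$ (edges of $E\setminus E_v$ simply contribute extra sets $e\setminus W'$). Since adding edges can only make the transversal number weakly larger, $\tau(H_W)\geq \tau(H'_{W'})\geq k$, and $H$ is a flower with $k$ petals with core $W$.

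The only subtlety I anticipate is bookkeeping in the last paragraph: verifying that $(H')_{W'}$ sits inside $H_W$ as a sub-hypergraph on the same ground set $V\setminus W$, so that a transversal of $H_W$ restricts to a transversal of $(H')_{W'}$ and the inequality on $\tau$ goes in the desired direction. Everything else is the standard greedy/pigeonhole argument, and no use of the cross-intersecting structure is needed, matching the fact that this lemma is a purely hypergraph-theoretic tool borrowed from \cite{haastad1995top}.
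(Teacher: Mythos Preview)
Your argument is correct and follows essentially the same route as the paper's proof: induct on $n$, branch on whether $\tau(H)\ge k$ (empty core) or not, pigeonhole onto a vertex $v$ in a small transversal, apply the inductive hypothesis to the link at $v$, and enlarge the resulting core by $v$. Your final paragraph is in fact a bit more careful than the paper's one-line ``adding $x$ back'' justification, since you spell out why $(H')_{W'}$ sits inside $H_W$ and hence why the transversal inequality points the right way.
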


\begin{proof} Induction on $n$. The basis $n = 1$ is trivial.

Now suppose that the lemma is true for
$n - 1$ and prove it for $n$. 
If $\tau (H) \geq k$ then $H$ itself is a flower
with at least $k$ petals (and an empty core). 
Otherwise, some set of size $k - 1$ intersects all the edges of $H$, and hence, at least 
$|E|/(k - 1)$
of the edges must contain some vertex $x$. 
The hypergraph
$H_{\{x\}} = (V_{\{x\}}, E_{\{x\}})$ 
has
$$|E_{\{x\}}| \geq \frac{|E|}{k - 1} > (k - 1)^{n-1}$$
edges, each of cardinality at most $n - 1$. 
By the induction hypothesis, 
$H_{\{x\}}$ contains a flower with $k$ petals and some core $Y$. 
Adding the element $x$ back to the sets in this flower, we obtain a flower in $H$ with the
same number of petals and the core $Y \cup \{x\}$. 
\end{proof}

Now let us prove Theorem~\ref{max}. Suppose the contrary, i. e. that, without loss of generality, $|A| \geq n^n + 1$.
Then by Lemma the hypergraph $(V, A)$ contains a flower with $n+1$ petals.
It means that every $b \in B$ intersects the core of the flower, and $H$ is not critical. A contradiction.
\end{proof}

\section{Open questions}

The most famous problem in hypergraph coloring is to determine the minimal number of edges in 
an $n$-uniform hypergraph with $\chi (H) = 3$ (it is usually denoted by $m(n)$). The best known bounds (\cite{Erdos2, radhakrishnan1998improved, cherkashin2015note}) are
\begin{equation}
c \sqrt {\frac{n}{\ln n}} 2^n \leq m(n) \leq \frac{e \cdot \ln 2 }{4} n^2  2^{n} (1 + o(1)).    
\label{1}
\end{equation}
P.~Erd\H{o}s and L.~Lov{\'a}sz in~\cite{EL} posed the same question for the class of intersecting families.
Even though the intersecting condition is very strong, it does not provide a better lower bound.
On the other hand, the upper bound in (\ref{1}) is probabilistic, so it does not work for intersecting families.
So the asymptotically best upper bound is $7^{\frac{n-1}{2}}$ for $n = 3^k$, which is given by iterated Fano plane.

Another question is to determine the minimal size $a(n)$ of the largest intersection in an $n$-uniform intersecting family.
The best bounds at this time are 
$$\frac{n}{\log_2 n} \leq a(n) \leq n-2.$$

\noindent Studying the mentioned problems for cross-intersecting families is also of interest.

Recall that Example~\ref{nn} shows that Theorem~\ref{max} is tight. 
On the other hand, $\max \min (|A|, |B|)$ over all cross-intersecting families with chromatic number 3 is unknown.
Obviously, one may take the example $(V, E)$ by P. Frankl, K. Ota and N. Tokushige and put $A = B = E$ to get lower bound (\ref{2}).

\subsubsection{Acknowledgements.} 
The work was supported by the Russian Scientific Foundation grant 16-11-10014. 
The author is grateful to A. Raigorodskii and F. Petrov for constant inspiration, to A. Kupavskii for historical review and
for directing his attention to the paper~\cite{haastad1995top} and to N. Rastegaev for very careful reading of the draft of the paper.

\bibliographystyle{plain}
\bibliography{main}

\end{document}